\newcommand{\Rien}[1]{}
\newcommand{\Formel}[1]{(\ref{#1})}
\newcommand{\order}{\mathcal O}
\newcommand{\Sieg}{\mathfrak H}
\newcommand{\B}{\mathbb B}
\newcommand{\id}{{\rm id}}
\renewcommand{\O}{{\mathcal O}}
\newcommand{\PN}{{\mathbb P}}
\newcommand{\lra}{\longrightarrow}
\newcommand{\KC}{{\mathbb C}}
\newcommand{\KZ}{{\mathbb Z}}
\newcommand{\KQ}{{\mathbb Q}}
\newcommand{\KN}{{\mathbb N}}
\newcommand{\End}{{\rm End}}
\newcommand{\KR}{\mathbb R}
\newtheorem{lemma1}[equation]{}
\newenvironment{lemma}{\begin{lemma1}{\bf Lemma.}}{\end{lemma1}}
\newenvironment{example}{\begin{lemma1}{\bf Example.}\rm}{\end{lemma1}}
\newenvironment{theorem}{\begin{lemma1}{\bf Theorem.}}{\end{lemma1}}
\newenvironment{proposition}{\begin{lemma1}{\bf Proposition.}}{\end{lemma1}}
\newenvironment{rem}{\begin{lemma1}{\bf Remark.}\rm}{\end{lemma1}}
\begin{document}

\title{Splitting submanifolds of families \\ of fake elliptic curves}
\author[P. Jahnke]{Priska Jahnke}
\address{P. Jahnke - Fachbereich Mathematik und Informatik - Freie Universit\"at Berlin - Arnimallee 3 - D-14195 Berlin, Germany}
\email{priska.jahnke@fu-berlin.de}
\author[I. Radloff]{Ivo Radloff}
\address{I. Radloff - Mathematisches Institut - Universit\"at Bayreuth
  - D-95440 Bayreuth, Germany}
\email{ivo.radloff@uni-bayreuth.de}
\date{\today}
\maketitle

\section*{Introduction}

Let $M_m$ be a compact complex manifold and $N_n$ a compact complex submanifold, $0 < n < m$. We say $N$ {\em splits in $M$} if the holomorphic tangent bundle sequence 
  \begin{equation} \label{TS}
    0 \lra T_N \lra T_M|_N \lra N_{N/M} \lra 0
  \end{equation}
splits holomorphically. Recall that this is the case if and only $id_{N_{N/M}}$ is mapped to zero under $H^0(N, N_{N/M}^* \otimes N_{N/M}) \lra H^1(N, N^*_{N/M} \otimes T_N)$. 

The splitting of \Formel{TS} is not particularly geometric in certain cases (\cite{J}. The situation is different if one imposes strong conditions on $M$. The following result is due to Mok:

\begin{proposition} \label{MokP} (\cite{Mok})
  Let $M$ be compact K\"ahler--Einstein with constant holomorphic sectional curvature. Then a submanifold $N$ splits in $M$ if and only if $N$ is totally geodesic.
\end{proposition}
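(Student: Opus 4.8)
The plan is to run both implications through the second fundamental form $\sigma$ of $N$ in $M$. Recall that for a complex submanifold of a K\"ahler manifold the second fundamental form has only a $(2,0)$-part, which is a \emph{holomorphic} section $\sigma$ of $S^2 T_N^* \otimes N_{N/M}$, and that $\sigma \equiv 0$ is exactly the condition that $N$ be totally geodesic. So the whole proposition reduces to showing that the splitting obstruction of \Formel{TS} vanishes if and only if $\sigma \equiv 0$.

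First I would make the obstruction class explicit in terms of $\sigma$. The K\"ahler metric on $M$ furnishes a $C^\infty$ splitting $s \colon N_{N/M} \to T_M|_N$ identifying $N_{N/M}$ with the orthogonal complement $(T_N)^\perp$. Since $\pi \circ s = \mathrm{id}$ with $\pi$ the projection onto $N_{N/M}$, applying $\bar\partial$ gives $\pi \circ \bar\partial s = 0$, so $\bar\partial s$ takes values in $\mathrm{Hom}(N_{N/M}, T_N)$ and its Dolbeault class in $H^1(N, N^*_{N/M}\otimes T_N)$ is precisely the image of $\mathrm{id}_{N_{N/M}}$ under the connecting map of \Formel{TS}. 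A short computation with the Chern connection, which equals the Levi--Civita connection because $M$ is K\"ahler, identifies $\bar\partial s$ with the adjoint $\sigma^* \in A^{0,1}(N, \mathrm{Hom}(N_{N/M}, T_N))$ of the second fundamental form. This already settles one direction: if $N$ is totally geodesic then $\sigma \equiv 0$, hence $\sigma^* \equiv 0$, the orthogonal splitting is holomorphic, and $N$ splits in $M$.

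For the converse I would argue by Hodge theory. Splitting means $[\sigma^*] = 0$, so it suffices to prove that $\sigma^*$ is the \emph{harmonic} representative of its own class; then $[\sigma^*]=0$ forces $\sigma^* \equiv 0$, whence $\sigma \equiv 0$ and $N$ is totally geodesic. As an extension cocycle $\sigma^*$ is automatically $\bar\partial$-closed, so the real content is the coclosedness $\bar\partial^* \sigma^* = 0$. Via the K\"ahler identity $\bar\partial^* = -i[\Lambda,\partial]$ (up to sign) for the holomorphic Hermitian bundle $\mathrm{Hom}(N_{N/M},T_N)$, and because $\Lambda$ annihilates the $(0,1)$-form $\sigma^*$, one gets $\bar\partial^* \sigma^* = -i\,\Lambda\,\partial \sigma^*$. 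Thus everything comes down to showing that the trace $\Lambda\,\partial\sigma^*$, which is essentially the divergence of $\sigma$, vanishes. Here the hypothesis enters: the Codazzi equation measures the failure of $D\sigma$ to be symmetric by the normal component of the ambient curvature tensor $R(X,Y)Z$ with $X,Y,Z$ tangent to $N$, and for a space form of constant holomorphic sectional curvature $R(X,Y)Z$ lies in the span of tangent directions. Hence that normal component vanishes, Codazzi becomes fully symmetric, and together with the holomorphicity $\bar\partial\sigma = 0$ this yields $\Lambda\,\partial\sigma^* = 0$.

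The main obstacle is precisely this last step: converting the constant-holomorphic-sectional-curvature hypothesis into the vanishing $\bar\partial^*\sigma^* = 0$ requires matching the Codazzi identity against the K\"ahler identities while keeping careful track of the bundle-valued contractions and conjugations involved in passing between $\sigma$ and $\sigma^*$. Everything before and after that computation is formal: the identification of the obstruction with $[\sigma^*]$ is soft, and once $\sigma^*$ is known to be harmonic the conclusion $[\sigma^*]=0 \Rightarrow \sigma \equiv 0$ is immediate.
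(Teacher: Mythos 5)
First, a point of comparison: the paper does not prove this proposition at all; it is imported from Mok's paper \cite{Mok} and used as a black box, so there is no in-paper argument to measure your attempt against. Judged on its own terms, your plan is the natural differential-geometric route, and its skeleton is correct: the Dolbeault representative of the splitting class of \Formel{TS} obtained from the orthogonal $C^\infty$ splitting $s$ is (up to sign) the adjoint $\sigma^*$ of the second fundamental form; totally geodesic means $\sigma \equiv 0$, in which case $s$ is holomorphic and $N$ splits; and conversely, harmonicity of $\sigma^*$ together with $[\sigma^*]=0$ forces $\sigma^* \equiv 0$, hence $\sigma \equiv 0$.

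However, one assertion you make is false as stated, and it is exactly the point where the curvature hypothesis lives. The second fundamental form of a complex submanifold of a \emph{general} K\"ahler manifold is not a holomorphic section of $S^2\Omega_N^1 \otimes N_{N/M}$: the Codazzi equation in the K\"ahler setting says precisely that $(\bar\partial\sigma)(\bar W; X, Y)$ equals the normal component of the ambient curvature $R(X,\bar W)Y$, so $\bar\partial\sigma = 0$ is not a formal consequence of K\"ahlerness but is exactly what you derive later from constant holomorphic sectional curvature. If holomorphicity of $\sigma$ were automatic, your argument would show that splitting is equivalent to total geodesy in \emph{every} compact K\"ahler manifold, which is false: the exceptional curve $E$ of a point blow-up of $\PN_2(\KC)$ splits, since $H^1(E, N_{E/M}^* \otimes T_E) = H^1(\PN_1, \O(3)) = 0$, but it is not totally geodesic for a generic K\"ahler metric. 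Once the hypothesis is moved to its correct place, the proof not only survives but simplifies: constant holomorphic sectional curvature gives $\bar\partial\sigma = 0$ by Codazzi; since the metrics are parallel for the Chern connections, $\partial\sigma^*$ is a metric contraction of $\overline{\bar\partial\sigma}$ and hence vanishes identically; therefore $\bar\partial^*\sigma^* = \pm i\,\Lambda\partial\sigma^* = 0$ with no separate trace or divergence computation needed. In other words, the step you single out as the ``main obstacle'' is immediate, and the genuine content of the proposition is the holomorphicity of $\sigma$, which your write-up takes for granted.
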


The manifolds $M$ in question here are $M = \PN_m(\KC)$, finite {\'e}tale quotients of complex tori and ball quotients, i.e., manifolds whose universal covering space $\tilde{M}$ is $\B_m(\KC) = \{z \in \KC^m | \; |z| < 1\}$, the non--compact dual of $\PN_m(\KC)$ in the sense of hermitian symmetric spaces. The compact submanifolds $N$ that split in $M$ can be described quite explicitely: in the case of $M = \PN_m(\KC)$, $N$ splits if and only if $N$ is a linear subspace (\cite{VdV}); if $M$ is a torus, then $N$ splits if and only if $N$ is a subtorus (\cite{J}). For the case $M$ a ball quotient see \cite{Ye}. 

The three types of $M$ share the common property that the universal covering space $\tilde{M}$ can be embedded into $\PN_m(\KC)$ such that $\pi_1(M)$ acts as a subgroup of $PGl_m(\KC)$. Manifolds with this property carry a holomorphic projective structure in the sense of Gunning (\cite{Gunning}). This property is important in Mok's proof.

In the case $M$ a compact Riemann surface or $M$ a projective complex surface (\cite{KO}), the existence of such a structure already implies $M$ K\"ahler Einstein and of constant holomorphic sectional curvature. In three dimensions this is no longer true (\cite{JRproj}): 

\begin{theorem}
  A projective threefold $M$ carries a holomorphic projective connection if and only if up to finite {\'e}tale coverings
\begin{enumerate}
 \item $M \simeq \PN_m(\KC)$ or
 \item $M$ is an abelian threefold or
 \item $M$ is a modular family of fake elliptic curves or
 \item $M$ is ball quotient.
\end{enumerate}
In any case the connection is flat.
\end{theorem}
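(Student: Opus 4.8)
The strategy is to prove the two implications separately, treating the converse as the shorter part. For the converse I would exhibit a flat holomorphic projective connection on each of the four types, which also settles the final clause ``the connection is flat'' for them. The structure on $\PN_3(\KC)$ is the standard one and is $PGl_4(\KC)$--invariant; since each remaining model develops equivariantly into $\PN_3(\KC)$ with monodromy in $PGl_4(\KC)$ --- the universal cover $\KC^3$ of an abelian threefold through an affine chart, the ball $\B_3(\KC)$ with $\pi_1$ inside $PU(3,1) \subset PGl_4(\KC)$, and, for a modular family of fake elliptic curves, the fibre uniformisation combined with the uniformisation of the quaternionic Shimura base curve --- the flat $\PN_3$--structure descends in each case. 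Thus only the forward implication, together with flatness in general, remains.

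For the forward direction I would first extract a numerical constraint from the mere existence of the connection. Passing to the associated normal projective (Cartan) connection in the sense of Gunning (\cite{Gunning}) and running the second Chern class computation of Kobayashi--Ochiai (\cite{KO}) in dimension three, the relevant characteristic form is pinned down by the connection, yielding the cohomological identity
\[ 3\, c_1(M)^2 = 8\, c_2(M). \]
This already excludes many candidates (the quadric threefold, for instance, fails it) and is precisely the equality case of the Miyaoka--Yau inequality. Running the minimal model program, I would then split according to the position of $K_M$. If $K_M$ is not nef, the connection restricted to the fibres of a Mori contraction is incompatible with all contraction types except the one giving $M \simeq \PN_3(\KC)$, yielding (1). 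If $K_M \equiv 0$, the identity forces $c_2(M) \equiv 0$, so the Ricci--flat K\"ahler--Einstein metric is in fact flat by Yau's argument and $M$ is covered by an abelian threefold, giving (2). If $K_M$ is big and nef, the equality case of Yau's theorem applied to the canonical model produces constant holomorphic sectional curvature, i.e. a ball quotient, giving (4); in all three of these cases the space form carries its standard flat projective structure.

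The remaining and hardest case is $K_M$ nef but neither ample nor trivial, where no K\"ahler--Einstein metric of definite sign is available and Yau's uniformisation does not apply directly; here the Iitaka fibration $f\colon M \to B$ should have $B$ a curve and abelian surfaces as fibres, and the goal is to recognise this as a modular family of fake elliptic curves. I would first show that the general fibre $F$ is totally geodesic for the connection, so that the induced structure on $F$ is again a holomorphic projective connection; combined with $K_F \equiv 0$ and the surface classification of \cite{KO}, this forces $F$ to be an abelian surface. The decisive and most delicate step is then to prove that the monodromy of the family, rigidified by the flat projective structure on the total space, factors through the unit group of an indefinite quaternion algebra, so that $B$ is a quaternionic Shimura curve and the fibres acquire quaternionic multiplication --- that is, they are genuinely fake elliptic curves rather than arbitrary abelian surfaces.

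I expect the main obstacle to be exactly this last case: both verifying that the fibres split off as totally geodesic submanifolds (so that the projective connection restricts at all) and showing that the period map of the family is forced into a quaternionic Shimura datum require controlling the variation of Hodge structure against the rigidity imposed by the three--dimensional connection, and ruling out the competing intermediate possibilities (isotrivial families, products, and the case of numerical dimension two). Once the family is identified, flatness in this case follows by writing down the developing map assembled from the fibre and base uniformisations, so that the connection is flat in all four cases and the final clause holds.
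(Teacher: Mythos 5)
You should know at the outset that the paper you were asked to compare against does not prove this theorem: it is stated as background and attributed to \cite{JRproj}, and the present paper's own contribution is Proposition~\ref{result}, which settles the splitting question inside case 3 of the list. So the only meaningful comparison is with the strategy of the cited paper. Measured against that, your outline tracks the genuine architecture rather well: the Chern class identity $3c_1(M)^2 = 8c_2(M)$ extracted from the prescribed form of the Atiyah class, the case division via the MMP (rational curves forcing $\PN_3(\KC)$, the numerically trivial case giving abelian threefolds via Yau, the Miyaoka--Yau equality case giving ball quotients), and the recognition of the residual fibred case as a Shimura-curve family of abelian surfaces. For that last step the cited paper leans on the Viehweg--Zuo characterization of Shimura curves in the moduli of abelian varieties \cite{VZ} --- a reference this paper's bibliography even carries --- which matches your "monodromy factors through the unit group of an indefinite quaternion algebra" goal.

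As a proof, however, your text has genuine gaps, the largest of which you name but do not close. First, the case $K_M$ nef but neither trivial nor big is where the entire substance of the theorem sits, and your treatment of it is a statement of intent: you do not show that the fibration has abelian surface fibres, you do not rule out numerical dimension two or isotrivial families, and you give no argument forcing the monodromy into a quaternionic Shimura datum --- this is exactly the point where one needs the variation-of-Hodge-structure rigidity input (maximal Higgs field / Arakelov-type equality in the sense of \cite{VZ}), and nothing about a three-dimensional projective connection hands that over for free. Second, your handling of flatness is circular in the forward direction: exhibiting a flat projective structure on each model proves that a flat connection \emph{exists}, not that the \emph{given} connection is flat. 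Since normal projective connections are not unique --- on an abelian threefold they form an affine space over the global sections of the trace-free part of $S^2\Omega_M^1 \otimes T_M$, which is a trivial bundle of positive rank on a torus --- the final clause "in any case the connection is flat" needs its own rigidity argument, carried out case by case in \cite{JRproj}. Third, in the non-nef case, "incompatible with all contraction types except the one giving $\PN_3(\KC)$" is an assertion, not an argument: one must restrict the connection to rational curves in the fibres of the extremal contraction and derive the exclusion, and in the big-and-nef case one needs $K_M$ ample (or an argument on the singular canonical model) before Yau's theorem in the equality case of Miyaoka--Yau can be invoked.
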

The above case 3.) is not covered by Mok's result. A modular family of fake elliptic curves for our purposes is a projective threefold $M$ that admits a holomorphic submersion 
  \[\pi: M \lra C\]
onto a compact Shimura curve $C$ such that every fiber is a smooth abelian surface $A$ and such that for the general fiber ${\rm End}_{\KQ}(A)$ is an indefinite division quaternion algebra over $\KQ$. We recall the construction in detail in \S 3. What we prove here is the following result which completes the classification in the projective case up to dimension three:

\begin{proposition} \label{result}
  Let $\pi: M \lra C$ be a modular family of fake elliptic curves. A compact submanifold $N$ of $M$ of dimension $0 < \dim N < 3$ splits in $M$ if and only if
\begin{enumerate}
  \item $N$ is an {\'e}tale multisection of $\pi$ or
  \item $N$ is an elliptic curve in a fiber of $\pi$.
\end{enumerate}
\end{proposition}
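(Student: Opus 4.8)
The plan is to exploit the fibration $\pi$ through the relative tangent sequence
\[ 0 \lra T_{M/C} \lra T_M \lra \pi^*T_C \lra 0, \]
whose restriction to a fiber $A=\pi^{-1}(c)$ satisfies $T_{M/C}|_A\cong T_A\cong\O_A^{\oplus2}$ and $\pi^*T_C|_A\cong\O_A$. I will use three structural facts repeatedly: (i) $C$ is a Shimura curve, so $T_C$ is negative and $K_C$ is ample; (ii) the family is modular, so its Kodaira--Spencer class is nonzero on every fiber; (iii) the general fiber $A$ is a \emph{simple} abelian surface, since $\End_\KQ(A)=D$ is a division algebra, and so contains no elliptic curve. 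Composing $T_N\hookrightarrow T_M|_N$ with the projection onto $\pi^*T_C|_N$ yields the differential $d(\pi|_N)\colon T_N\to(\pi|_N)^*T_C$, and I split the argument according to whether $\pi|_N$ is constant (the \emph{vertical} case, $N$ inside a single fiber $A$) or dominant (the \emph{horizontal} case, $\pi(N)=C$).

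\textbf{Vertical case.} If $\dim N=2$ then $N=A$ is a whole fiber, and the obstruction to splitting $0\to T_A\to T_M|_A\to N_{A/M}\to0$ lies in $H^1(A,N_{A/M}^*\otimes T_A)=H^1(A,T_A)$ and equals the extension class of the restricted relative tangent sequence, i.e. the Kodaira--Spencer class; by (ii) it is nonzero, so $A$ does not split. If $\dim N=1$ then $N\subset A$ is a curve. Here I first show that a holomorphic complement $H$ (rank $2$) to $T_N$ in $T_M|_N$ forces a complement to $T_N$ inside $T_A|_N$: since $H\cap T_N=0$, a rank count shows $H\cap T_A|_N$ is a line subbundle with $T_A|_N=T_N\oplus(H\cap T_A|_N)$, so $N$ splits already \emph{in the fiber} $A$. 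By the torus case (\cite{J}) $N$ is then a translate of a subtorus, hence an elliptic curve, and $A$ is non-simple, so by (iii) such $N$ occur only in special fibers. For the converse I pass to the universal cover, which by \S3 is a product $\tilde C\times\KC^2$ with $\tilde C$ the universal cover of $C$. An elliptic curve $E\subset A$ lifts to a translate of a line $\ell\subset\KC^2$; the constant subspace $T_{\tilde C}\oplus W$, with $W\subset\KC^2$ a linear complement of $\ell$, is a complement to $T_E$ that is invariant under the monodromy of loops in $E$, which lands in the group of fiber translations and thus acts trivially on tangent frames. Hence this splitting descends and $E$ splits in $M$.

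\textbf{Horizontal case.} Now $d(\pi|_N)\ne0$. For $\dim N=1$, where $\pi|_N\colon N\to C$ is a finite multisection, I claim splitting is equivalent to being {\'e}tale. If $\pi|_N$ is {\'e}tale then $d(\pi|_N)\colon T_N\to(\pi|_N)^*T_C$ is an isomorphism, so $T_N$ maps isomorphically onto the quotient $\pi^*T_C|_N$; thus $T_M|_N=T_{M/C}|_N\oplus T_N$ is a canonical splitting with $N_{N/M}\cong T_{M/C}|_N$. Conversely, if $\pi|_N$ is ramified then at a ramification point $N$ is tangent to the fiber, $T_N\cap T_{M/C}|_N\ne0$ there, the canonical complement degenerates, and I must show no other complement exists, i.e. that the class in $H^1(N,N_{N/M}^*\otimes T_N)$ is nonzero. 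For $\dim N=2$, the general fiber $F=F_c$ of $\pi|_N$ is a curve in $A_c$, and the aim is to extract from a splitting of $N$ in $M$ a splitting of $F$ in $A_c$ for general $c$; by the torus case $F$ would be elliptic, so $A_c$ non-simple, contradicting (iii). The mechanism mirrors the vertical curve case: a complement $H$ to $T_N$ that can be arranged to lie in $T_{M/C}|_N$ restricts fiberwise to a complement of $T_F$ in $T_{A_c}|_F$.

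\textbf{Main obstacle.} The genuine difficulties are concentrated in the horizontal case. First, excluding ramified multisections requires evaluating the splitting obstruction (the second fundamental form) along the ramification locus and showing it cannot vanish; here the negativity of $T_C$ from (i) should be the decisive input. Second, for horizontal surfaces the passage from a global complement $H$ on $N$ to a \emph{vertical} one, lying in $T_{M/C}|_N$, is obstructed by a class in $H^1(N,N_{N/M}^*\otimes T_{N/C})$, and controlling this class---so that the fiberwise restriction applies for general $c$---is the technical heart of the proof. Once the fiberwise splitting is in place, the simplicity of the general fiber (iii) closes the surface case and pins down the vertical curves, while the canonical {\'e}tale-multisection splitting and the universal-cover construction for elliptic curves supply exactly the two families asserted in the statement.
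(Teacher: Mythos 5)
Your argument for the key converse direction --- that an elliptic curve $E$ in a fiber $A_\tau$ splits in $M$ --- rests on a false claim: the monodromy of loops in $E$ does \emph{not} act trivially on tangent frames. The deck transformations of $\KC^2\times\Sieg_1 \lra M$ corresponding to $\pi_1$ of a fiber are $(z,\tau)\mapsto (z+\lambda_\tau,\tau)$, and the translation vector $\lambda_\tau=\lambda\left(\begin{smallmatrix}\tau\\ 1\end{smallmatrix}\right)$ depends on $\tau$; their differential is the unipotent matrix $\left(\begin{smallmatrix}\id_{2\times 2} & {}_1\lambda\\ 0 & 1\end{smallmatrix}\right)\neq \id$ (this is \Formel{foa} with $\gamma=\id$). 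In particular no constant complement $W\oplus T_{\tilde C}$ is monodromy-invariant. Note also the internal contradiction: if fiber-directed deck transformations really acted trivially on frames, the constant horizontal line field would descend to every fiber and the whole fiber would split, contradicting your own Kodaira--Spencer argument in the vertical surface case. Worse, the gap cannot be repaired at the flat level: a monodromy-invariant complement to $T_E$ would force ${}_1\mu$ and $(1,0)^t$ to span the same line, i.e.\ $\mu_{21}=0$; writing $\mu=k+lx+my+nxy$ gives $\mu_{21}=m-n\sqrt a$ with $m,n\in\KQ$ and $\sqrt a\notin\KQ$, so $\mu$ would be diagonal, and then \Formel{EVmu} would give $\tau'=\mu_{22}\in\KR$, absurd. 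Hence $\varphi^*\Omega_M^1$ \emph{never} splits as a flat bundle (as a $\pi_1(E_{\tau'})$--module); the splitting asserted in the proposition is a genuinely non-flat phenomenon. The paper's Proposition~\ref{ellinfsplit} produces it from a holomorphic section that is non-constant on the universal cover, namely $(f_1,f_2,-f_1z)$ with $(f_1,f_2)^t$ an eigenvector of $\mu^t$ for the eigenvalue $\tau'$ (equations \Formel{fin} and \Formel{EVmu}); this eigenvalue relation is exactly what your construction never uses, and it illustrates the Remark in \S 5: holomorphic splitting of the bundle sequence need not come from a splitting of the monodromy module --- here it cannot.

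Second, the horizontal case is not proved: you explicitly leave both the exclusion of ramified multisections and of horizontal surfaces as ``obstacles''. The paper closes both at once with the tool your proposal never invokes, Proposition~\ref{split}: a submanifold $N_n$ that splits in a manifold carrying a holomorphic projective connection inherits such a connection and satisfies $\frac{c_1(K_N)}{n+1}=\frac{c_1(K_M)|_N}{4}$. For a horizontal curve this reads $K_N\equiv\frac12 K_M|_N$, and since $K_M=2\pi^*K_C$ by \Formel{KMKC}, Hurwitz' formula $K_N=\pi_N^*K_C+R$ forces $R=0$ --- ramified multisections are excluded with no curvature or second-fundamental-form analysis. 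For a horizontal surface, the inherited connection plus Kobayashi--Ochiai leaves only $\PN_2$, torus quotients, or ball quotients, while $K_N\equiv\frac34 K_M|_N$ makes $\pi|_N$ an elliptic fibration over $C$ with $g_C\geq 2$, which none of these can admit (Lemma~\ref{SplitCurves}); by contrast, your plan of moving the rank-one complement $H$ into $T_{M/C}|_N$ is obstructed by a cohomology class you concede you do not control. For completeness: your correct pieces are the {\'e}tale multisection splitting, the reduction of vertical curves to the torus case of \cite{J} (a nice alternative to the paper's numerical argument), and the Kodaira--Spencer non-splitting of fibers --- though the nonvanishing of the Kodaira--Spencer class on \emph{every} fiber is itself asserted rather than proved, whereas the paper establishes it by the explicit factor-of-automorphy computation in Lemma~\ref{fibnonspl}.
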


\

\noindent {\bf Notations.} Manifolds are complex manifolds, $\Omega_X^1$ denotes the bundle of holomorphic $1$--forms, $T_X$ the holomorphic tangent bundle. We do not distinguish between Cartier divisors and line bundles, e.g., $K_X = \omega_X = \det\Omega_X^1$. 

For a vector bundle $E$, $\PN(E)$ denotes the ``hyperplane'' bundle (defined by projectivizations of the transition functions of $E^*$).

\section{Holomorphic normal projective connections}
\setcounter{equation}{0}

There are definitions of projective structures and connections in the language of principal bundles. We essentially follow Kobayashi and Ochiai (\cite{KO}).

We first recall the notion of the {\em Atiyah class} (\cite{At}) associated to a holomorphic vector bundle $E$ on the complex manifold $M$: It is the splitting class $b(E) \in H^1(M, Hom(E,E)\otimes \Omega_M^1)$ of the first jet sequence
  \begin{equation} \label{jet}
     0 \lra \Omega_M^1 \otimes E \lra J_1(E) \lra E \lra 0,
  \end{equation}
i.e., the image of $id_E$ under the first connecting homomorphism of \Formel{jet} tensorized with $E^*$. 

If $\Theta^{1,1}$ denotes the $(1,1)$--part of the curvature tensor of some differentiable connection on $E$, then, under the Dolbeault isomorphism, $b(E)$ corresponds to $[\Theta^{1,1}] \in H^{1,1}(M, Hom(E,E))$. In particular, for $M$ K\"ahler
 \begin{equation} \label{fcc}
   trace(b(E)) = -2i\pi c_1(E) \in H^1(M, \Omega_M^1).
 \end{equation}
Because of \Formel{fcc} we consider $a(E) := -\frac{1}{2i\pi}b(E)$, the {\em normalized Atiyah class}. For properties and functiorial behaviour of these classes see \cite{At}.

% trace(rA) = r trace(A)

\subsection{Definitions} Let $M$ be some $m$--dimensional compact complex
K\"ahler manifold. Then $M$ carries a holomorphic normal projective connection if the normalised Atiyah class of the holomorphic cotangent bundle
has the form
  \begin{equation} \label{AtProj} 
a(\Omega_M^1) = \frac{c_1(K_M)}{m+1} \otimes id_{\Omega_M^1} + id_{\Omega_M^1}
  \otimes \frac{c_1(K_M)}{m+1} \in H^1(M, \Omega_M^1 \otimes T_M \otimes
  \Omega_M^1)
  \end{equation}
where we use the identities $\Omega_M^1 \otimes T_M \otimes
\Omega_M^1 \simeq
\End(\Omega_M^1) \otimes \Omega_M^1 \simeq \Omega_M^1 \otimes
\End(\Omega_M^1)$.

A Chech-solution to \Formel{AtProj} can be interpreted as a connection on $T_M$ satisfying certain conditions similar to the Schwarzian derivative (\cite{MM}). We will not make use of this fact.

$M$ is said to carry a projective structure if there exists a
holomorphic projective atlas, i.e., an atlas whose charts can be embedded into $\PN_m(\KC)$
such that the coordinate change is given by restrictions of
projective automorphisms. A manifold with a projective structure carries a (flat) projective connection, meaning that zero is a cocycle solution to \Formel{AtProj}.

\begin{example} \label{Exmpl} Compact complex manifolds that admit a flat holomorphic projetive connection:
 
  1.) $M = \PN_m(\KC)$. 

2.) Any manifold $M$ whose universal covering space $\tilde{M}$ can be embedden into $\PN_m(\KC)$ such that $\pi_1(M)$ acts by
restrictions of projective transformations admits a projective
structure. In particular

2.1.) Ball quotients $\B_m(\KC)/\Gamma$, where $\Gamma \subset SU(1,m)$ is discrete and torsion free and

2.2.) tori $\KC^m/\Lambda$ where $\Lambda \simeq \KZ^{2m}$, carry a projective structure (\cite{KO}). Note that 1) and 2) covers all compact K\"ahler Einstein manifolds with constant holomorphic sectional curvature.

3.) If $M$ carries a holomorphic projective connection and $M' \lra M$ is finite {\'e}tale, then $M'$ carries a holomorphic projective connection.
\end{example}

\section{Holomorphic projective connections and splitting submanifolds}
A general remark on coverings: Let $N$ (resp. $N'$) be a compact submanifold of some compact manifold $M$ (resp. $M'$). Let $\nu: M' \lra M$ be finite {\'e}tale such that $\nu(N') = N$. Then $N$ splits in $M$ if and only of $N'$ splits in $M'$. 

Indeed, the tangent bundle sequence \Formel{TS} of $N'$ in $M'$ is the pull back of the tangent bundle sequence of $N$ in $M$. By the trace map, $\O_M$ is a direct summand of $\nu_*\O_{M'}$. We have an inclusion in cohomology
  \[H^1(N, N_{N/M} \otimes T_N^*) \hookrightarrow H^1(N', N_{N'/M'} \otimes T_{N'}^*)\]
coming from Leray spectral sequence. The splitting class of \Formel{TS} of $N$ in $M$ is mapped to the splitting class of \Formel{TS} of $N'$ in $M'$. In this sense, we can always replace a pair $(M, N)$ by $(M', N')$.

\

We give a necessary but not sufficient condition for a submanifold to be split (see example~\ref{Exnonspl}):

\begin{proposition} \label{split}
  Let $M_m$ be compact K\"ahler carrying a holomorphic projective connection. Let $\iota: N_n \hookrightarrow M_m$ be a compact submanifold that splits in $M_m$. Then:
\begin{enumerate}
  \item $N$ carries a holomorphic projective connection.
  \item  $\frac{c_1(\iota^*K_M)}{m+1} = \frac{c_1(K_N)}{n+1}$ in $H^1(N, \Omega_N^1)$.
   \item $a(N^*_{N/M}) = id_{N_{N/M}} \otimes \frac{c_1(K_N)}{n+1} \in H^1(M, N_{N/M}^*\otimes N_{N/M} \otimes \Omega_N^1)$.
\end{enumerate}
\end{proposition}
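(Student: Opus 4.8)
The plan is to combine two standard functorial properties of the Atiyah class: its compatibility with pullback along the embedding $\iota$, and the fact that the Atiyah class of a direct sum is block diagonal. First I would recall (cf. \cite{At}) that for any holomorphic bundle $E$ on $M$ the normalised Atiyah class commutes with restriction up to contracting the form factor, i.e.
\[ a(\iota^*E) = (\id \otimes \rho)_*\, \iota^* a(E), \]
where $\rho : \iota^*\Omega_M^1 \lra \Omega_N^1$ is the canonical cotangent restriction (the transpose of $T_N \hookrightarrow T_M|_N$). Since $N$ splits in $M$, the conormal sequence, namely the dual of \Formel{TS}, splits holomorphically, so $\iota^*\Omega_M^1 \simeq \Omega_N^1 \oplus N^*_{N/M}$ and $\rho$ becomes the projection onto the first summand.

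Applying this with $E = \Omega_M^1$ gives the backbone of the argument. On one hand, because $\iota^*\Omega_M^1$ is a direct sum, its Atiyah class is block diagonal, $a(\iota^*\Omega_M^1) = a(\Omega_N^1) \oplus a(N^*_{N/M})$, so both off-diagonal $\mathrm{Hom}$-blocks vanish. On the other hand I would substitute the defining identity \Formel{AtProj} for $a(\Omega_M^1)$, restrict to $N$, and apply $(\id\otimes\rho)_*$. Writing $\iota^*\frac{c_1(K_M)}{m+1} = \beta + \gamma$ according to the splitting, with $\beta \in H^1(N,\Omega_N^1)$ and $\gamma \in H^1(N, N^*_{N/M})$, a direct block computation expresses the four blocks of $(\id\otimes\rho)_*\iota^*a(\Omega_M^1)$ in terms of $\beta$ and $\gamma$: the two $\End$-blocks come out as $\beta\otimes id_{\Omega_N^1} + id_{\Omega_N^1}\otimes\beta$ and $id_{N^*_{N/M}}\otimes\beta$, while one cross block equals $\gamma\otimes id_{\Omega_N^1}$ and the other vanishes.

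Comparing the two expressions then yields everything at once. Block diagonality forces $\gamma\otimes id_{\Omega_N^1}=0$, hence $\gamma=0$; thus $\iota^*\frac{c_1(K_M)}{m+1}=\beta$ already lies in $H^1(N,\Omega_N^1)$. The $\End(\Omega_N^1)$-block now reads $a(\Omega_N^1)=\beta\otimes id_{\Omega_N^1}+id_{\Omega_N^1}\otimes\beta$; taking the trace and using the normalisation \Formel{fcc}, which gives $\mathrm{trace}\,a(\Omega_N^1)=c_1(\Omega_N^1)=c_1(K_N)$, I obtain $(n+1)\beta=c_1(K_N)$, so $\beta=\frac{c_1(K_N)}{n+1}$. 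This is simultaneously statement (1), since $a(\Omega_N^1)$ then has exactly the shape required by \Formel{AtProj}, and statement (2), since $\beta=\iota^*\frac{c_1(K_M)}{m+1}=\frac{c_1(\iota^*K_M)}{m+1}$. Finally the $\End(N^*_{N/M})$-block gives $a(N^*_{N/M})=id_{N^*_{N/M}}\otimes\beta=id_{N_{N/M}}\otimes\frac{c_1(K_N)}{n+1}$, which is statement (3).

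The hard part will be the first functoriality statement together with the bookkeeping in the block computation: one must verify carefully that pulling back $a(\Omega_M^1)$ and contracting the form factor produces no correction terms beyond the projection $\rho$, so that the clean restriction formula above is legitimate, and then keep the $\End$- and $\mathrm{Hom}$-blocks, together with which Kronecker contraction survives the projection onto $\Omega_N^1$, correctly separated. Once the restriction formula and the direct-sum block-diagonality are in place, the remaining manipulations are linear algebra and a single trace.
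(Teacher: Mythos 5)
Your proposal is correct and follows essentially the same route as the paper's proof: both pull back the projective-connection identity \Formel{AtProj} using the functoriality of the normalised Atiyah class, exploit Atiyah's direct-sum (block-diagonality) property for $\iota^*\Omega_M^1 \simeq \Omega_N^1 \oplus N^*_{N/M}$, and then read off statements (1)--(3) from the diagonal blocks together with a trace computation. The only difference is cosmetic: you additionally record the vanishing of the off-diagonal block, i.e.\ $\gamma = 0$ (correct, since contracting $\gamma \otimes id_{\Omega_N^1}$ returns $n\gamma$), a by-product not needed for the three statements, whereas the paper organizes the same projections through the splitting map $s$ and the restriction $d\iota$ and, for (3), simply notes that the term $\frac{\iota^*c_1(K_M)}{m+1} \otimes d\iota$ dies under the projection.
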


\begin{proof} 
Sequence \Formel{jet} for a direct sum $E_1 \oplus E_2$ of vector bundles is the direct sum of the sequences associated to $E_1$ and $E_2$, respectively (\cite{At}, proposition~8). In particular $a(E_1 \oplus E_2)$ is the direct sum of $a(E_1)$ and $a(E_2)$ in a natural way. 

By assumption, $\iota^*\Omega^1_M \simeq \Omega^1_N\oplus N^*_{N/M}$. Then we can compute $a(\Omega^1_N)$ and $a(N_{N/M})$ from $a(\iota^*\Omega^1_M)$ by projecting the class to the corresponding summands.

The class $a(\Omega_M^1)$ is given by \Formel{AtProj} and hence
  \begin{equation} \label{AtPB}
a(\iota^*\Omega_M^1) = \frac{\iota^*c_1(K_M)}{m+1} \otimes d\iota + id_{\iota^*\Omega_M^1} \otimes \frac{c_1(\iota^*(K_M))}{m+1}
  \end{equation}
in $H^1(N, \iota^*\Omega_M^1 \otimes \iota^*T_M \otimes \Omega_N^1)$, where $d\iota: \iota^*\Omega_M^1 \lra \Omega_N^1$. Note that we distinguish between pull back of cohomology classes and pull back of forms. Now denote the splitting map by $s:\iota^*T_M \lra T_N$. 

The class $a(\Omega_N^1)$ is the image of $a(\iota^*\Omega_M^1)$ under the map induced by $d\iota \otimes s \otimes id$:
 \[H^1(N, \iota^*\Omega_M^1 \otimes \iota^*T_M \otimes \Omega_N^1) \lra H^1(\Omega_N^1 \otimes T_N \otimes \Omega_N^1).\]
Applying $d\iota$ to the first factor maps $a(\iota^*\Omega_M^1)$ to the following class in $H^1(M, \Omega_N^1 \otimes \iota^*T_M \otimes \Omega_N^1)$:
 \[\frac{c_1(\iota^*K_M)}{m+1} \otimes d\iota + d\iota \otimes \frac{c_1(\iota^*(K_M)}{m+1}.\]
The map $s$ induces a map $H^0(N, \iota^*T_M \otimes \Omega_N^1) \lra H^0(N, T_N\otimes \Omega_N^1)$, mapping $d\iota$ to $id_{\Omega^1_N}$ by definition. We therefore obtain
 \begin{equation} \label{AtN} 
   a(\Omega_N^1) = \frac{c_1(\iota^*K_M)}{m+1} \otimes id_{\Omega_N^1} + id_{\Omega_N^1} \otimes \frac{c_1(\iota^*K_M)}{m+1}.
 \end{equation}
The trace obtained by contracting the first two factors $\Omega_N^1 \otimes T_N$ yields $c_1(K_N) \in H^1(N, \Omega_N^1)$. Then \Formel{AtN} shows
  \[trace(a(\Omega^1_N) = \frac{c_1(\iota^*K_M)}{m+1} + n\frac{c_1(\iota^*K_M)}{m+1} = \frac{n+1}{m+1}c_1(\iota^*K_M)\]
and we obtain 2.). By \Formel{AtN} $N$ carries a holomorphic projective connection. Formula 3.) is obtained in the same way using
 \[H^1(N, \iota^*\Omega_M^1 \otimes \iota^*T_M \otimes \Omega_N^1) \lra H^1(N, N_{N/M}^* \otimes N_{N/M} \otimes \Omega_N^1).\]
We only remark that $\frac{\iota^*c_1(K_M)}{m+1} \otimes d\iota$ is mapped to zero.
\end{proof}

\begin{example} \label{Exspl}
  The K\"ahler--Einstein case:

  1.) By a result of Van de Ven (\cite{VdV}), a compact complex submanifold $N_n$ of $\PN_m(\KC)$ splits if and only if $N$ is a linear subspace. Here we have $\O_{\PN_n}(1) = \O_{\PN_m}(1)|_N$ and $N_{N/M} \simeq \O_{\PN_n}(1)^{\oplus m-n}$.

In the case of tori $\KC^m/\Gamma$ and ball quotients $\B_m(\KC)/\Gamma$ a splitting submanifold may or may not exist depending on the choice of $\Gamma$:

2.1.) By a result of one of the authors (\cite{J}), a compact complex submanifold $N_n$ of a torus $M = \KC^m/\Lambda$ splits if and only if $N$ is a subtorus. Here $\O_N = \O_T|_N$ and the normal bundle is trivial.

2.2.) See \cite{Ye} for an example in the case $\B_2(\KC)/\Gamma$.
\end{example}

\begin{example} \label{Exnonspl}
The three conditions in proposition~\ref{split} are not sufficient for a submanifold to split: Let $\Gamma$ be some torsion free congruence subgroup of $Sl_2(\KZ)$. Then $\Gamma$ acts without fixed points on $\Sieg_1$ as a group of Moebius transformations. The set of matrices
  \[\gamma_{m,n} = \left(\begin{array}{ccc}
      1 & m & n \\
      0 & a & b \\
      0 & c & d
    \end{array}\right), \quad \gamma = \left(\begin{array}{cc}
      a & b \\
      c & d
    \end{array}\right) \in \Gamma,\]
is a subgroup $\Gamma_{\Lambda}$ of $Sl_3(\KC)$ that acts projectively on $\KC \times \Sieg_1$, i.e., $\gamma_{m,n}(z, \tau)=(\frac{z+m\tau + n}{c\tau + n}, \frac{a\tau + b}{c\tau + d})$. The action is free, $U = \KC \times \Sieg_1/\Gamma_{\Lambda}$ is a smooth (non--compact) manifold. The canonical map
  \[\pi: U \lra \Sieg_1/\Gamma\]
is proper holomorphic with elliptic fibers $\pi^{-1}(\tau) \simeq \KC/(\KZ\tau + \KZ) =: E_{\tau}$. The map has sections. For the standard groups $\Gamma = \Gamma_0(n), \Gamma_1(n)$, we obtain the usual 'universal' families of elliptic curves.

By construction, $U$ has a projective structure. Any fiber $E_\tau$ satisfies the three conditions of proposition~\ref{split}. Nevertheless $E_{\tau}$ does not split in $U$ (This is well known or can be proved along the arguments of lemma~\ref{fibnonspl}).
\end{example}

\section{Modular families of fake elliptic curves}
\setcounter{equation}{0}

In this section we recall the construction and basic properties of families of fake elliptic curves following Shimura (\cite{Shim}). The construction depends an the choice of a PEL datum.

\subsection{Quaternions} Let $B$ be an indefinite division quaternion algebra over $\KQ$. Then $B \otimes_{\KQ} \KR \simeq M_{2\times 2}(\KR)$. Fix once and for all an embedding
  \[B \hookrightarrow M_{2\times 2}(\KR)\]
such that the reduced norm and trace are given by usual determinant and trace, respectively. From now on think of $B$ as a matrix group generated over $\KQ$ by
  \[x = \left(\begin{array}{cc}
              \sqrt{a} & 0 \\
                 0 & -\sqrt{a}
              \end{array}\right) \quad \mbox{and} \quad y = \left(\begin{array}{cc}
                                                    0 & b \\
                                                    1 & 0
                                                 \end{array}\right)\]
where $a,b \in \KQ$, $a > 0, b < 0$. We have $x^2 = a id = a, y^2 = b, xy=-yx$ and $B = \KQ + \KQ x + \KQ y + \KQ xy$. The usual quaternion (anti-) involution $'$ on $B$ given by $(k + lx + my + nxy)' = k - lx - my - nxy$. It extends to an involution on $M_{2 \times 2}(\KR)$ which we also denote by $'$.

\subsection{The abelian surface $A_{\tau}$} Any $\tau \in \Sieg_1$, the upper half plane of $\KC$, endows $\KR^4 \simeq M_{2\times 2}(\KR)$ with a complex structure
  \begin{equation} \label{CS}
    M_{2\times 2}(\KR) \lra \KC^2, \quad m \mapsto m_{\tau} := m\left(\begin{array}{c}\tau \\ 1\end{array}\right).
  \end{equation}
For the construction we may take a maximal order $\order_B \subset B$. Define 
  \begin{equation} \label{FEC} 
    \order_{B, \tau} := \order_B\left(\begin{array}{c}\tau \\ 1\end{array}\right), \quad A_{\tau} := \KC^2/\order_{B, \tau}.
  \end{equation}
Then $A_{\tau}$ is an abelian surface and there is a natural inclusion map
  \[\order_B \hookrightarrow {\rm End}(A_\tau)\]
given by multiplication from the left. The induced embedding $B \hookrightarrow End_{\KQ}(A_\tau)$ is an isomorphism iff $A_\tau$ is simple (see for example the classification of possible $End_{\KQ}$'s in \cite{BiLa}).

Projectivity can be seen as follows: Choose $\rho \in B$ such that $\rho = -\rho'$ and $\rho^2 < 0$. Then 
  \[E(m_1, m_2) := trace(\rho m_1 m_2')\]
is a symplectic, non degenerate form on $M_{2\times 2}(\KR)$. Some rational multiple of $E$ takes integral values on $\order_B$ and satisfies the Riemann conditions for any $\tau$, defining a polarization $H_{\tau}$ on $A_{\tau}$.

\subsection{Isomorphic $A_{\tau}$'s} Let $\order_{B, +}^* \subset \order_B$ be the group of units of positive norm. Then $\order_{B, +}^* \subset Sl_2(\KR)$. The group therefore acts on $\Sieg_1$ as a group of Moebius transformations.

If $\tau' = \gamma(\tau)$ for some $\gamma = \left(\begin{array}{cc} a & b \\
c & d \end{array}\right) \in \order_{B, +}^*$, then
  \[\order_{B, \tau'} = \order_B \left(\begin{array}{c}\gamma(\tau) \\ 1\end{array}\right) = \frac{1}{c\tau + d}\order_B \gamma \left(\begin{array}{c}\tau \\ 1\end{array}\right) =  \frac{1}{c\tau + d}\order_B \left(\begin{array}{c}\tau \\ 1\end{array}\right)\]
Multiplication by $\frac{1}{c\tau+d}$ therefore induces an isomorphism $A_{\tau} \simeq A_{\tau'}$. On the underlying real vector space $M_{2\times 2}(\KR)$, this map is given by multiplication by $\gamma$ from the right. The form $E$ is invariant under this action, i.e. $E(m_1\gamma, m_2\gamma) = E(m_1, m_2)$ for any $m_1, m_2 \in M_{2\times 2}(\KR)$. The isomorphism therefore is an isomorphism of polarized abelian varieties $(A_{\tau}, H_{\tau}) \simeq (A_{\tau'}, H_{\tau'})$.

\subsection{The modular family $\pi: M \lra C$} Let $\Gamma \subset \order_{B, +}^*$ be of finite index and torsion free. The set of matrices
\begin{equation} \label{gl}
    \gamma_{\lambda} := \left(\begin{array}{cc}
            id_{2\times 2} & \lambda \\
             0_{2\times 2} & \gamma
        \end{array}\right) \in Sl_4(\KR) \quad \lambda \in \order_B, \gamma \in \Gamma
\end{equation}
defines a group $\Gamma_{\order_B}$. Think of $\KC^2\times \Sieg_1$ as embedded into the standard chart $\{X_3=1\}$ of $\PN_3$. $\Gamma_{\order_B}$ acts projectively on $\KC^2\times \Sieg_1$:
  \[\gamma_{\lambda}(z, \tau) = \big(\frac{z + \lambda_{\tau}}{c\tau+d}, \frac{a\tau+b}{c\tau+d}\big), \quad \lambda \in \order_B, \gamma=\left(\begin{array}{cc}
                                                    a & b \\
                                                    c & d
                                                 \end{array}\right) \in \Gamma\]
The action is free and properly discontinously; the quotient $M$ is a smooth threefold carrying a holomorphic projective structure.

The homomorphism $\Gamma_{\order_B} \lra \Gamma, \gamma_{\lambda} \lra \gamma$
gives a holomorphic map $\pi: M \lra C=\Gamma/\Sieg_1$. The fiber over $[\tau]$ is isomorphic to $A_{\tau}$. Both $M$ and $C$ are compact and the $H_\tau$ glue to a $\pi$--ample $H_{\pi}$ on $M$. The map $\Sieg_1 \lra \KC^2 \times \Sieg_1, \tau \mapsto (\tau, 0)$ induces a section $C \lra M$ of $\pi$. In other words, $M$ is an abelian scheme.

There is an inclusion
  \[\order_B \hookrightarrow End(M),\]
i.e., any $\lambda \in \order_B$ induces a $C$--endomorphism of $M$.

We call $\pi: M \lra C$ a {\em modular family of fake elliptic curves}. For our purposes, $M$ depends on a choice of $B$, $B \hookrightarrow M_{2\times 2}(\KR)$, $\order_B$ and $\Gamma$ as above.

\subsection{Factors of automorphy}
The group $\Gamma$ is torsion free. Then $a(\tau, \gamma) = (c\tau + d) \in H^1(\Gamma, \O_{\Sieg_1}^*)$ is a well defined factor of automorphy defining a theta characteristic on $C$ which we denote by $\frac{K_C}{2}$ for simplicity. 

The holomorphic tangent bundle $T_M$ is defined by the following factor in $H^1(\Gamma_{\order_B}, Gl_3(\O_{\KC^2\times \Sieg_1}))$:
 \begin{equation} \label{foa}
    a(\gamma_{\lambda}, (z,\tau)) := 
\mbox{$\frac{1}{c\tau + d}$}\left(\begin{array}{cc}
id_{2\times 2} & \frac{{}_1\lambda - c(z + \lambda_{\tau})}{c\tau + d} \\
0_{1 \times 2} & \frac{1}{c\tau + d}\end{array}
 \right),
  \end{equation}
%  \begin{equation} \label{foa}
%    a(\gamma_{\lambda}, (z=(z_1, z_2),\tau)) := (c\tau + d)\left(\begin{array}{cc}
%id_{2\times 2} & 0_{2 \times 1}\\
%\frac{{}_1\lambda^t - c(z^t + \lambda_{\tau}^t)}{c\tau + d} & \frac{1}{c\tau + d}\end{array}
% \right)^{-1}.
%  \end{equation}
where we introduce the following notation:
  \[{}_1m \;\; \mbox{denotes the first column of } m \in M_{2\times 2}(\KR).\]
The block form of $a$ corresponds to the exact sequence
  \[0 \lra T_{M/C} \simeq \pi^*\big(\mbox{$\frac{-K_C}{2} \oplus \frac{-K_C}{2}$}\big) \lra T_M \lra \pi^*T_C \lra 0.\]
Taking determinants we find in particular
 \begin{equation} \label{KMKC}
   K_M = 2\pi^*K_C.
 \end{equation}

\section{Splitting submanifolds}
\setcounter{equation}{0}

From Proposition~\ref{split} we get the following possibilities: 
\begin{lemma} \label{SplitCurves}
  Let $\pi: M \lra C$ be a modular family of fake elliptic curves as in \S 3. Let $N$ be a compact submanifold that splits in $M$. Then either
  \begin{enumerate}
  \item $N$ is a fiber of $\pi$ or
  \item $N$ is a smooth elliptic curve in a fiber of $\pi$ or
  \item $N$ is an {\'e}tale multisection of $\pi$.
  \end{enumerate}
\end{lemma}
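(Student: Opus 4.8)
The plan is to use the three necessary conditions of Proposition~\ref{split} together with the explicit geometry of $\pi: M \lra C$ to restrict the possibilities for a splitting submanifold $N$. The key relation to exploit is condition 2.), namely
\[
\frac{c_1(\iota^*K_M)}{m+1} = \frac{c_1(K_N)}{n+1}
\]
with $m = 3$, which combined with \Formel{KMKC} reads $\frac{1}{2}c_1(\iota^*\pi^*K_C) = \frac{c_1(K_N)}{n+1}$. The strategy is to split into the two cases $\dim N = 1$ and $\dim N = 2$, and in each case compare the restriction of the fibration structure to $N$.

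First I would treat $\dim N = 1$, so $N$ is a smooth curve. Either $\pi|_N$ is constant, in which case $N$ lies in a single fiber $A_\tau$; or $\pi|_N: N \lra C$ is nonconstant, hence a finite covering, so $N$ is an étale multisection. In the first subcase I must show $N$ is a smooth elliptic curve inside the abelian surface $A_\tau$. Here condition 2.) gives $\frac{1}{2}c_1(\iota^*\pi^*K_C) = \frac{1}{2}c_1(K_N)$; since $N$ maps to a point under $\pi$, the left side vanishes, forcing $c_1(K_N) = 0$, i.e. $N$ has genus one. Being a smooth genus-one curve in an abelian surface, $N$ is an elliptic curve, giving case 2.). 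In the multisection subcase there is nothing further to prove at this stage beyond noting that $N$ maps finitely to $C$; whether the covering is étale should follow because $\pi$ is a smooth submersion, so $\pi|_N$ is unramified onto its image, hence étale onto $C$ after possibly passing to the image. I would record this carefully.

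Next I would treat $\dim N = 2$. Again split according to whether $\pi|_N$ is constant. If $\pi|_N$ is constant then $N$ is a fiber $A_\tau$ (a two-dimensional submanifold of the three-fold lying in a single fiber must be the whole fiber, since each fiber is itself two-dimensional), giving case 1.). If $\pi|_N$ is nonconstant, then $N \lra C$ is a surjective (hence finite-fiber-generic) morphism from a surface to a curve; I expect to rule this out or to show $N$ is again a multisection-type object, but since $\dim N = 2 > \dim C = 1$ the fibers of $\pi|_N$ are curves, so $N$ is not a multisection in the sense of case 3.). The cleanest route is to apply condition 2.) with $n = 2$: it yields $\frac{1}{2}c_1(\iota^*\pi^*K_C) = \frac{1}{3}c_1(K_N)$, and I would use the fibration $\pi|_N: N \lra C$ together with $K_C$ being the square of the theta characteristic to derive a numerical contradiction with adjunction on the surface $N$, unless $N$ is a fiber. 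The main obstacle is exactly this surface case: one must combine the Chern-class identity with the structure of $N$ as a family of curves over $C$ and show the only possibility consistent with condition 2.) is the trivial one where $\pi|_N$ is constant. I would handle this by computing $c_1(K_N)$ via the relative canonical formula for $\pi|_N$ and checking that the proportionality forced by condition 2.) cannot hold for a genuinely two-dimensional image, thereby eliminating every case except the three listed.

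The step I expect to be hardest is this two-dimensional nonconstant case, where the purely cohomological conditions of Proposition~\ref{split} must be converted into a geometric rigidity statement. Conditions 1.) and 3.) (that $N$ carries a projective connection, and the normal-bundle Atiyah-class identity) may be needed as supplementary input if the Chern-class count alone is insufficient; in particular condition 3.) constrains $N_{N/M}$ and could be used to show that a hypothetical two-dimensional multisection would force an incompatible structure on its normal bundle.
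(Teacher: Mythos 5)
Your case decomposition (curves versus surfaces, $\pi|_N$ constant versus surjective) is the same as the paper's, and your curve-in-a-fiber step is correct: condition 2.) of Proposition~\ref{split} together with \Formel{KMKC} forces $\deg K_N = 0$, so $N$ is elliptic. But two of your steps have genuine gaps, and in both the missing ingredient is precisely where the splitting hypothesis has to do its work.

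First gap: {\'e}taleness of a multisection does \emph{not} follow from $\pi$ being a submersion. The restriction of a submersion to a curve can very well be ramified, because the curve may be tangent to a fiber: already $N = \{x = y^2\}$ in $\KC^2$, mapped to $\KC$ by $(x,y) \mapsto x$, is a smooth curve on which the submersion restricts to a ramified map. So your claim that "$\pi|_N$ is unramified onto its image, hence {\'e}tale" is false as stated, and with it case 3.) of the lemma is not established. The paper extracts {\'e}taleness from the splitting itself: writing $\pi_N = \pi|_N$, Hurwitz gives $K_N \sim \pi_N^*K_C + R$ with $R \geq 0$ the ramification divisor, while condition 2.) and \Formel{KMKC} give $K_N \equiv \frac{1}{2}K_M|_N \equiv \pi_N^*K_C$; comparing the two forces $R \equiv 0$, hence $R = 0$.

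Second gap: in the surface case with $\pi_N$ surjective, the numerical route you propose as primary does not end in a contradiction. From condition 2.) and \Formel{KMKC} one gets $K_N \equiv \frac{3}{2}\pi_N^*K_C$, and adjunction ($K_N \cdot F = F^2 = 0$) does show $\pi_N$ is an elliptic fibration, relatively minimal since fibers of $\pi$ are abelian surfaces and so contain no rational curves. But Kodaira's canonical bundle formula then reads $K_N \equiv \big(2g_C - 2 + \chi(\O_N) + \sum_i \frac{m_i - 1}{m_i}\big)F$ with $F$ the general fiber and $m_iF_i$ the multiple fibers, so the proportionality required by condition 2.) amounts to $\chi(\O_N) + \sum_i \frac{m_i-1}{m_i} = g_C - 1$, which an abstract elliptic surface over $C$ can perfectly well satisfy: no contradiction. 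Condition 3.) cannot rescue this, since for a hypersurface $N_{N/M}$ is a line bundle and condition 3.) is equivalent to condition 2.) via adjunction. What actually closes this case in the paper is condition 1.): $N$ inherits a holomorphic projective connection, so by Kobayashi--Ochiai $N$ is $\PN_2(\KC)$, a finite {\'e}tale quotient of an abelian surface, or a ball quotient. The first is excluded because $M$ contains no rational curves; a ball quotient is hyperbolic and so cannot contain the elliptic fibers of $\pi_N$; and a surface covered by $\KC^2$ admits no surjection onto the genus $\geq 2$ curve $C$. Hence $N$ must be a fiber. This classification step is not optional backup, as your last paragraph suggests; it is the only input that eliminates the two-dimensional surjective case.
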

In Lemma~\ref{fibnonspl} below we show that, as in the case of modular families of elliptic curves, a fiber in fact never splits.

\begin{proof}
  For the proof we may assume $g_C > 1$. We first show that $N$ cannot be a surface: 

{\em 1. Case. } Let $N$ be a complex surface that splits in $M$. By proposition~\ref{split} $N$ carries a holomorphic projective connection. By \cite{KO}, $N \simeq \PN_2(\KC)$ or $N$ is a finite {\'e}tale quotient of an abelian surface or $N$ is a ball quotient. As $M$ does not contain a rational curve, $N \not\simeq \PN_2(\KC)$. 

If $N$ is not a fiber of $\pi$, then $\pi_N=\pi|_N: N \lra C$ is surjective. By proposition~\ref{split}, $K_N \equiv \frac{3}{4}K_M|_N$. Since $K_M$ is trivial on every fiber of $\pi$ by (\ref{KMKC}), $K_N$ is (numerically) trivial on every fiber of $\pi_N$. The adjunction formula shows that $\pi_N$ defines an elliptic fibration on $N$.

A ball quotient is hyperbolic and cannot contain an elliptic curve (any holomorphic map $\KC \lra \B_2(\KC)$ is constant). A torus does not have a surjective map to a curve of genus $>1$ for the same reason (any holomorphic map $\KC^2 \lra \B_1(\KC)$ is constant). Therefore, $N$ must be a fiber of $\pi$.

{\em 2. Case.} Let $N$ be a compact Riemann surface that splits in $M$. Then $g_N > 0$, as $M$ does not contain rational curves. By Proposition~\ref{split}, $K_N \equiv \frac{1}{2}K_M|_N$.

If $N$ is contained in a fiber of $\pi$, then $\deg K_N = \frac{1}{2}K_M.N = 0$ and $N$ is an elliptic curve. If $\pi_N: N \lra C$ is surjective of degree $d$, then
  \[K_N \sim \pi_N^*K_C + R\]
by Hurwitz formula, where $R$ is effective and $R = 0$ iff $\pi_N$ is {\'e}tale. By (\ref{KMKC}), $K_M \equiv 2\pi^*K_C$. Then Hurwitz' formula reads
  \[K_N \equiv \frac{1}{2}K_M|_N + R.\]
Combining this with $K_N \equiv \frac{1}{2}K_M|_N$ from above we find $R = 0$ and $\pi_N$ {\'e}tale.
\end{proof}

\section{Fibers are non--split}
\setcounter{equation}{0}

Fix some $\tau \in \Sieg_1$. Then $A_\tau$ from \Formel{FEC} can be viewed as the fiber of $\pi$ over $[\tau] \in \Sieg_1/\Gamma$. Denote by $\iota: A_\tau \lra M$ the inclusion map. Then we have
  \begin{equation} \label{AinM}
     0 \lra N_{A_\tau/M}^* \simeq \O_{A_\tau} \lra \iota^*\Omega_M^1 \lra \Omega^1_{A_\tau} \simeq \O_{A_{\tau}}^{\oplus 2} \lra 0.
  \end{equation}
The following lemma seems to be well-known within the theory of Kuga fiber spaces. Since we could not find a reference, we give a proof.

\begin{lemma} \label{fibnonspl}
  The induced map $H^0(A_\tau, N^*_{A_\tau/M}) \hookrightarrow H^0(A_\tau, \iota^*\Omega_M^1)$ is an isomorphism. In particular, $A_\tau$ is non--split in $M$.
\end{lemma}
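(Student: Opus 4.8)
The plan is to extract everything from the long exact cohomology sequence of the conormal sequence \Formel{AinM}. Since $N^*_{A_\tau/M}\simeq\O_{A_\tau}$ and $\Omega^1_{A_\tau}\simeq\O_{A_\tau}^{\oplus 2}$ on the abelian surface $A_\tau$, we have $h^0(N^*_{A_\tau/M})=1$, $h^0(\Omega^1_{A_\tau})=2$ and $h^1(N^*_{A_\tau/M})=h^1(\O_{A_\tau})=2$. The sheaf inclusion $N^*_{A_\tau/M}\hookrightarrow\iota^*\Omega^1_M$ is injective on global sections for free, so the claim is equivalent to $h^0(\iota^*\Omega^1_M)=1$, and by exactness this is equivalent to injectivity of the connecting homomorphism
\[\delta\colon H^0(A_\tau,\Omega^1_{A_\tau})\lra H^1(A_\tau,N^*_{A_\tau/M})=H^1(A_\tau,\O_{A_\tau}).\]
Thus the whole lemma reduces to showing that no nonzero holomorphic $1$--form on the fiber lifts to $\iota^*\Omega^1_M$. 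The closing assertion is then formal: if \Formel{AinM} (equivalently the tangent sequence \Formel{TS}) split, then $\delta$ would vanish, contradicting its injectivity.

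To compute $\delta$ I would use the factor of automorphy \Formel{foa}. The lattice $\order_{B,\tau}$ of $A_\tau$ is the subgroup of $\Gamma_{\order_B}$ with $\gamma=\mathrm{id}$, i.e. $c=0,d=1$, for which \Formel{foa} specializes to the constant unipotent matrix $J_\lambda=\left(\begin{smallmatrix} \mathrm{id}_{2\times2} & {}_1\lambda\\ 0 & 1\end{smallmatrix}\right)$ acting on $T_M$. Dually, a section of $\iota^*\Omega^1_M$ over the fiber is a holomorphic map $g=(g_1,g_2,g_3)\colon\KC^2\to(\KC^3)^*$ with $g(z+\lambda_\tau)=g(z)J_\lambda^{-1}$ for all $\lambda\in\order_B$. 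The first two components then satisfy $g_i(z+\lambda_\tau)=g_i(z)$, hence descend to the compact torus and are constant, say $g_1=c_1,g_2=c_2$; this pair is exactly the image of $g$ in $H^0(\Omega^1_{A_\tau})$. The conormal component is forced to obey the additive relation
\[g_3(z+\lambda_\tau)-g_3(z)=-c_1\lambda_{11}-c_2\lambda_{21}=:\phi(\lambda),\]
and the class $[\phi]\in H^1(A_\tau,\O_{A_\tau})$ is precisely $\delta(c_1,c_2)$.

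It remains to check that $\phi$ is a coboundary only for $c_1=c_2=0$. Here I would invoke the standard identification $H^1(A_\tau,\O_{A_\tau})=\mathrm{Hom}_{\KR}(\order_{B,\tau},\KC)/\mathrm{Hom}_{\KC}(\KC^2,\KC)$, under which $[\phi]=0$ iff $\phi$ extends to a $\KC$--linear functional $\ell(z)=\alpha z_1+\beta z_2$ with $\ell(\lambda_\tau)=\phi(\lambda)$ for every $\lambda\in\order_B$, that is
\[\alpha(\lambda_{11}\tau+\lambda_{12})+\beta(\lambda_{21}\tau+\lambda_{22})=-c_1\lambda_{11}-c_2\lambda_{21}.\]
This is the decisive point, and it is exactly where the fake--elliptic structure enters: as $\order_B$ is a maximal order in the quaternion algebra $B$, it is a full lattice in $B\otimes_{\KQ}\KR=M_{2\times2}(\KR)$, so the four entries $\lambda_{11},\lambda_{12},\lambda_{21},\lambda_{22}$ run $\KR$--independently over $\order_B$. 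Comparing the complex coefficients of these four independent real parameters yields $\alpha=\beta=0$ together with $\alpha\tau=-c_1$, $\beta\tau=-c_2$, hence $c_1=c_2=0$. This proves $\delta$ injective and therefore the lemma.

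The main obstacle I anticipate is the bookkeeping in the middle step rather than any hard estimate: fixing the correct dual factor of automorphy on $\Omega^1_M$ and confirming that the cocycle $\phi$ really represents $\delta(c_1,c_2)$ and not a twist of it. Once $\delta$ is identified with $[\phi]$, the purely linear--algebraic fact that a maximal order spans $M_{2\times2}(\KR)$ over $\KR$ closes the argument; the classical family of Example~\ref{Exnonspl}, where the lattice spans only a rank--$2$ space, is handled by the same mechanism adapted to that smaller lattice.
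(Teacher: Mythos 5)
Your proposal is correct and takes essentially the same route as the paper: both restrict the factor of automorphy \Formel{foa} to the fiber to obtain the unipotent flat description of $\iota^*\Omega_M^1$, and both reduce to the identical linear system over the four $\KR$--independent generators of $\order_B$, whose solvability fails precisely because $\order_B$ is a full lattice in $M_{2\times 2}(\KR)$. The only difference is packaging: the paper counts global sections of $\iota^*\Omega_M^1$ directly (showing the third component is affine linear since its derivatives are lattice--invariant, then a $4\times 4$ determinant), whereas you compute the connecting homomorphism $\delta$ and invoke the standard description of $H^1$ of the structure sheaf of a complex torus --- which is proved by exactly that same elementary argument.
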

Before the proof we first illustrate the general method. We have (see section~3 for notations)
 \[\iota_*: \pi_1(A_\tau) \simeq \order_B \lra \pi_1(M) \simeq \Gamma_{\order_B}, \quad \iota_*(\lambda) = id_{\lambda}.\]
The bundle $i^*\Omega_M^1$ is given by the pull back to $A_\tau$ of the factor of automorphy dual to \Formel{foa} (i.e., the pull back of $(a(\gamma_{\lambda}, (z,\tau))^{-1})^t$). We get the following representation of the fundamental group
  \begin{equation} \label{rhoA}
\rho_A: \pi_1(A_\tau) \simeq \order_B \lra Gl_3(\KC), \quad \lambda \mapsto \left(\begin{array}{cc}
         id_{2\times 2} & 0_{2\times 1} \\
           -{}_1\lambda^t & 1
     \end{array}\right).
 \end{equation}
The bundle $\iota^*\Omega_M^1$ is a flat bundle and \Formel{AinM} is a sequence of flat bundles. In terms of representations, $\KC^3$ as a $\pi_1(A_{\tau})$ module is the extension of the trivial one dimensional and the trivial two dimensional module.

\begin{rem}
  Let $0 \lra K \lra V \lra Q \lra 0$ be a short exact sequence of complex vector spaces which are $G$ modules, where $G = \pi_1(G)$ for some complex manifold $M$. We get a short exact sequence of flat vector bundles $0 \lra {\mathcal K} \lra {\mathcal V} \lra {\mathcal Q} \lra 0$.

If $V \simeq K \oplus Q$ as $G$--modules, then the sequence of flat vector bundles splits holomorphically. The convers, however, need not be true, the map
  \[H^1(G, Q^* \otimes K) \lra H^1(M, {\mathcal Q}^*\otimes {\mathcal K})\]
of obstruction spaces might have a non trivial kernel.
\end{rem}

\begin{proof} (of Lemma~\ref{fibnonspl})
The space $H^0(A_\tau, \iota^*\Omega_M^1)$ is isomorphic to the space of holomorphic $f = (f_1, f_2, f_3) \in \O_{A_\tau}^{\oplus 3}$ satisfying ($z = (z_1, z_2)^t$)
  \begin{equation} \label{invcond}
f(z+\lambda_\tau) = \rho_A(\lambda)f(z) \quad \mbox{for any } \lambda \in \order_B.
  \end{equation}
The sections corresponding to $H^0(A_\tau, N^*_{A_\tau/M}) \hookrightarrow H^0(A_\tau, \iota^*\Omega_M^1)$ are $f = (0,0,b)^t$, $b \in \KC$ constant. We have to show that these are all.

Let $f$ satisfy \Formel{invcond}. Then $f_i(z+\lambda_\tau) = f_i(z)$ for $i = 1,2$ implies $f_1, f_2$ constant. The third equation reads $f_3(z + \lambda_\tau) = -\lambda_{11}f_1 - \lambda_{21}f_2 + f_3(z)$, $\lambda \in \order_B$ arbitrary. Then $\partial f_3/\partial z_i$ are constant and hence $f_3(z_1, z_2) = a_1z_1 + a_2z_2 + b$, $a_1, a_2, b \in \KC$. After subtracting $(0,0,b)^t$ we may assume $b = 0$. 

The third equation now reduces to 
  \[0 = \lambda_{11}f_1 + \lambda_{21}f_2 + (a_1, a_2)\lambda_\tau \quad \mbox{for any } \lambda \in \order_B.\]
The four generators $\lambda_1, \lambda_2, \lambda_2, \lambda_4$ of $\order_B$ yield four linear equations in $(f_1, f_2, a_1, a_2)$. The defining matrix in $M_{4 \times 4}(\KC)$ is
   \[\left(\begin{array}{cccc}
            {}_1\lambda_1 &  {}_1\lambda_2 & {}_1\lambda_3 & {}_1\lambda_4 \\
             \lambda_{1,\tau} & \lambda_{2,\tau} & \lambda_{3, \tau} & \lambda_{4, \tau}
    \end{array}\right) = \left(\begin{array}{cc}
 \id_{2\times 2} & 0\\
 \tau \cdot \id_{2\times 2} & \id_{2 \times 2}
\end{array}\right)
\cdot 
\left(\begin{array}{cccc}
{}_1\lambda_1 &  {}_1\lambda_2 & {}_1\lambda_3 & {}_1\lambda_4 \\
   {}_2\lambda_1 &  {}_2\lambda_2 & {}_2\lambda_3 & {}_2\lambda_4 
  \end{array}\right).\]
The determinant is nonzero since $\lambda_1, \lambda_2, \lambda_2, \lambda_4$ are $\KR$--independent. The only solution therefore is $(f_1, f_2, a_1, a_2) = (0,0,0,0)$.
\end{proof}

\section{Elliptic curves in fake elliptic curves}
\setcounter{equation}{0}

Let $\pi: M \lra C$ be a modular family of fake elliptic curves. We call matrices in the kernel of
 \[B^{\times} \lra PGl_2(\KC)\]
projectively trivial. Under this map, any $b \in B^{\times}$ acts on $\PN_1(\KC)$. Identify 
  \[\Sieg_1 = \{[\tau:1] \mid \tau \in \Sieg_1\}.\]
If some $b \in B^{\times}$ fixes some $\tau \in \Sieg_1$, then $\det b > 0$ and $b$ fixes $\Sieg_1$.

The next proposition shows the existence of elliptic curves in special fibers of $\pi$:

\begin{proposition}
  The abelian surface $A_{\tau}$ from \Formel{FEC} is isogeneous to a product of elliptic curves if and only if $\tau$ is a fixed point of some projectively non--trivial $b \in B^{\times}$. 

For a given elliptic curve $E$ the following conditions are equivalent:
\begin{enumerate}
  \item There exists a non constant holomorphic map $f: E \lra A_{\tau}$. 
  \item $A_{\tau}$ is isogeneous to $E \times E$.
  \item ${\rm End}_{\KQ}(A_{\tau}) \simeq M_{2\times 2}({\rm End}_{\KQ}(E))$.
\end{enumerate}
In any case $E$ and $A_\tau$ are CM.
\end{proposition}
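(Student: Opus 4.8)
The plan is to translate everything into the lattice picture of \Formel{FEC} and to exploit that $B$ is a division algebra. Recall that left multiplication embeds $\order_B\hookrightarrow\End(A_\tau)$, hence $B\hookrightarrow\End_\KQ(A_\tau)$; since an abelian surface is non-simple precisely when it is isogenous to a product of elliptic curves, and since $\End_\KQ(A_\tau)=B$ exactly when $A_\tau$ is simple, the whole statement reduces to understanding when $\End_\KQ(A_\tau)$ is strictly bigger than $B$. The elementary observation I would record first is that, for $b=\left(\begin{smallmatrix}\alpha&\beta\\\gamma&\delta\end{smallmatrix}\right)\in B^{\times}$, fixing $\tau\in\Sieg_1$ as a M\"obius transformation is equivalent to $\binom{\tau}{1}$ being a complex eigenvector of the matrix $b$, with eigenvalue $\gamma\tau+\delta$. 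A short computation shows that a non-scalar such $b$ forces $\gamma\neq0$, so the eigenvalue is non-real and the reduced characteristic polynomial of $b$ over $\KQ$ is irreducible; hence $\KQ(b)\subset B$ is an imaginary quadratic field.

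For the first equivalence I would argue both directions in this language. For ``$\Leftarrow$'', given a projectively non-trivial $b$ fixing $\tau$, set $K=\KQ(b)$; every $b'\in K$ has $\binom{\tau}{1}$ as eigenvector, so $K\binom{\tau}{1}$ lies in the single complex line $L=\KC\binom{\tau}{1}$ and is a rank-two $\KQ$-subspace of it. Thus $\order_{B,\tau}\cap L$ contains $(\order_B\cap K)\binom{\tau}{1}$, a rank-two lattice, and $L/(\order_{B,\tau}\cap L)$ is an elliptic curve inside $A_\tau$, so $A_\tau$ is non-simple. For ``$\Rightarrow$'', non-simplicity produces a complex line $L$ with $L\cap\order_{B,\tau}$ of rank two; picking a $\KQ$-basis $w_1,w_2\in B$ with $(w_1)_\tau,(w_2)_\tau$ spanning $L$ forces $(w_2)_\tau=c\,(w_1)_\tau$ for some $c\in\KC$, whence $b:=w_1^{-1}w_2\in B^{\times}$ satisfies $b\binom{\tau}{1}=c\binom{\tau}{1}$ and therefore fixes $\tau$; it is projectively non-trivial because $w_1,w_2$ are $\KQ$-independent.

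For the chain $(1)\Rightarrow(2)\Rightarrow(3)\Rightarrow(1)$ the engine is the classification of endomorphism algebras of abelian surfaces together with the fact that $B$ is a division algebra. A non-constant $f\colon E\to A_\tau$ gives a one-dimensional abelian subvariety isogenous to $E$, so $A_\tau\sim E\times E''$; here I would invoke that a division algebra cannot embed into a product of fields: if $E\not\sim E''$ then $\End_\KQ(A_\tau)=\End_\KQ(E)\times\End_\KQ(E'')$, and since $B$ is simple the kernel of either projection is $0$ or all of $B$, so some projection onto a factor is injective, impossible because $\dim_\KQ B=4$ exceeds the dimension of either factor. Hence $E\sim E''$ and $A_\tau\sim E\times E$, which is $(2)$. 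Since $\End_\KQ$ is an isogeny invariant, $\End_\KQ(A_\tau)\cong\End_\KQ(E\times E)=M_{2\times2}(\End_\KQ(E))$, giving $(3)$; that $\End_\KQ(E)\neq\KQ$, i.e.\ that $E$ is CM, follows because $M_{2\times2}(\KQ)$ is split and cannot contain the division algebra $B$. The implication $(2)\Rightarrow(1)$ is immediate from the inclusion $E\hookrightarrow E\times E$, and for $(3)\Rightarrow(2)$ I would use Artin--Wedderburn: $M_{2\times2}(\End_\KQ(E))$ is simple, so $A_\tau$ is non-simple and in fact $\sim E'\times E'$ with $\End_\KQ(E')\cong\End_\KQ(E)$ an imaginary quadratic field $K$; since any two elliptic curves with complex multiplication by the same field $K$ are isogenous (both being isogenous to $\KC/(\order_B\cap K)$-type quotients, i.e.\ to $\KC/\order_K$), we get $E'\sim E$ and $A_\tau\sim E\times E$. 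Finally $A_\tau\sim E\times E$ with $E$ CM is itself CM, which is the closing assertion.

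I expect the main obstacle to be the two ``backward'' steps: producing the quaternion element from a splitting in Part~1, and pinning down the precise isogeny type $E\times E$ in $(1)\Rightarrow(2)$. These are exactly where the lattice description must be combined with the structure theory --- $B$ division, Artin--Wedderburn, and the isogeny of CM curves sharing a CM field --- whereas the eigenvector computation and the ``forward'' constructions are routine.
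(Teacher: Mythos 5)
Your proof is correct, and it actually goes beyond what the paper writes down. For the first equivalence your mechanism is the same as the paper's, only dressed differently: the paper encodes non-simplicity of $A_\tau$ as the existence of a non-constant homomorphism $\varphi: E_{\tau'} \lra A_\tau$, translates this into the relation \Formel{FundRel}, $\tau'\lambda\binom{\tau}{1} = \mu\binom{\tau}{1}$ with $\lambda, \mu \in \order_B$, and takes $b = \lambda^{-1}\mu$; you encode it as a complex line $L$ with $L \cap \order_{B,\tau}$ of rank two and take $b = w_1^{-1}w_2$. This is the same ratio-of-lattice-elements argument; your backward direction is slightly more self-contained, since you build the elliptic curve inside $A_\tau$ directly from the sublattice $(\order_B \cap \KQ(b))\binom{\tau}{1}$, whereas the paper scales $b$ back into $\order_B$ to land in \Formel{FundRel} (a formulation it then reuses, together with $\mu$, in the remark on the normal form of $\varphi$ and in \S 6). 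The genuine difference is the second part: for the equivalence of (1)--(3) and the CM claim the paper says only that it ``is not difficult to prove'' and skips all details, whereas you supply complete arguments --- Poincar\'e reducibility plus the observation that the simple $4$--dimensional algebra $B$ cannot embed into $\End_{\KQ}(E) \times \End_{\KQ}(E'')$ (forcing $E \sim E''$), Artin--Wedderburn uniqueness for $(3) \Rightarrow (2)$, and the standard fact that elliptic curves with the same CM field are isogenous. So your write-up is strictly more complete than the paper's on this point. One small blemish: the parenthetical ``both being isogenous to $\KC/(\order_B \cap K)$-type quotients'' is off, since $K = \End_{\KQ}(E)$ is not given to you as a subfield of $B$ (it merely admits an embedding into $B$ as a splitting field); what you need, and what is true, is simply that every elliptic curve with CM by $K$ is isogenous to $\KC/\order_K$ for $\order_K$ the ring of integers, under either complex embedding, which is conjugation-stable. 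This slip does not affect the validity of the argument.
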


For an elliptic curve $E$, $End_{\KQ}(E) \simeq \KQ$ or $End_{\KQ}(E)$ is an imaginary quadratic extension of $\KQ$. A given elliptic curve $E$ appears in a fiber of the modular family if and only if $End_{\KQ}(E)$ is a splitting field of $B$.

\begin{proof} $A_{\tau}$ is isogeneous to a product of elliptic curves if and only if there exists a non constant homomorphism  $\varphi: E_{\tau'} \lra A_{\tau}$ for some $\tau' \in \Sieg_1$, where $E_{\tau'} = \KC/\Lambda_{\tau'}$ for $\Lambda_{\tau'} = \KZ\tau' + \KZ$. Such a homomorphism exists if and only if we find $0 \not= \lambda, \mu \in \order_B$ such that
\begin{equation} \label{FundRel} 
  \tau'\lambda\left(\begin{array}{c}
                  \tau \\ 1 \end{array}\right) = \mu\left(\begin{array}{c}
                  \tau \\ 1 \end{array}\right).
 \end{equation}
Indeed, a given $\varphi$ may be interpreted as a $\KC$-linear map $\varphi: \KC \lra \KC^2$ satisfying $\varphi(\KZ\tau'+\KZ) \subset \order_{B, \tau}$ and then $\lambda$ and $\mu$ are induced by $\varphi(1)$ and $\varphi(\tau')$, respectively. Conversely, given $\mu, \lambda$ as above, $\varphi$ is defined by $\KC \lra \KC^2, z \mapsto z\lambda_{\tau}$. The map $\varphi$ is non--constant if and only if $\lambda \not= 0$.

The existence of a non constant $\varphi$ then implies $\tau$ is a fixed point of $b := \lambda^{-1}\mu \in B^{\times}$. If conversely $\tau$ is a fixed point of some projectively non trivial $b \in B^{\times}$, we immediately get \Formel{FundRel}.

The equivalence of the four points is not difficult to prove. We skip details since it is not important for our purposes. We only recall that $End_{\KQ}(A_\tau)$ always contains $B$. 
\end{proof}

\begin{rem} \label{simpleform}
  Let $\varphi: E_{\tau'} \lra A_{\tau}$ be as above induced by $0 \not= \lambda, \mu \in \order_B$ such that \Formel{FundRel} holds. Modulo isogenies we may assume $\lambda = id_{2\times 2}$.

Indeed, choose $n \in \KN$ such that $\mu' := n\lambda^{-1}\mu \in \order_B$, consider $\tilde{\varphi}: E_{n\tau'} \lra A_\tau$ induced by $z \mapsto (\tau, 1)^tz$. Then $\lambda \circ \tilde{\varphi} = \varphi \circ p$ where $p: E_{n\tau'} \lra E_{\tau'}$ is the canonical map and $\lambda \in End(A_\tau)$. 

Note that $\lambda$ also induces an isogeny of $M$.
\end{rem}

\

\section{Elliptic curves in fibers split}
\setcounter{equation}{0}

Let $\iota: E \hookrightarrow A_\tau$ be an elliptic curve in the fiber $A_\tau$ of $\pi$. Then we have
\begin{equation} \label{EinM}
   0 \lra N_{E/M}^* \lra \iota^*\Omega^1_M \lra \Omega_E^1 = \O_E \lra 0
 \end{equation}
Our aim is to prove that an elliptic curve in a fiber of $\pi: M \lra C$ splits in $M$. Choose $\tau' \in \Sieg_1$ and
  \[\varphi: E_{\tau'} \lra A_{\tau}\]
such that $E = \varphi(E_{\tau'})$. Then \Formel{EinM} splits holomorphically if and only if $\varphi^*$ of \Formel{EinM} splits. By Remark~\ref{simpleform} we may assume that $\varphi$ as a map $\KC \lra \KC^2$ is given by $z \mapsto (\tau, 1)^tz$, sending $\tau'$ to 
  \begin{equation} \label{EVmu}
     \tau'\left(\begin{array}{c}
                 \tau \\
                   1
               \end{array}\right) = \mu_\tau
  \end{equation}
for some $0 \not= \mu \in \order_B$. The matrix $\mu$ remains fixed for the rest of this section.

The idea is the same as in \S 4: we will view \Formel{EinM} as a sequence of flat bundles coming from representations of $\pi_1(E_{\tau'})$ and compute dimensions of spaces of global sections. We have
  \[\varphi_*: \pi_1(E_{\tau'}) = \KZ\tau' + \KZ \lra \pi_1(M) \simeq \Gamma_{\order_B}, \quad \varphi_*(m\tau' + n) = id_{m\mu + n id_{2\times 2}}\]
in the notation \Formel{gl}. The flat bundle $\varphi^*\Omega_M^1$ is given by the pull back of the factor of automorphy dual to \Formel{foa}. We get the representation
\[\rho_E: \pi_1(E_{\tau'}) \lra Gl_3(\KC), \quad \rho(m\tau' + n) = \left(\begin{array}{ccc}
             1 & 0 & 0 \\
             0 & 1 & 0 \\
             -m\mu_{11}-n & -m\mu_{21} & 1
          \end{array}\right)\]
as in (\ref{rhoA}). It turns $\KC^3$ into a $\pi_1(E_{\tau'})$--module. The bundle $\Omega_{E_{\tau'}}^1$ is given by the trivial one dimensional module, and $d\varphi$ is induced by the $\pi_1(E_{\tau'})$--map
 \[\KC^3 \lra \KC, \quad (z_1, z_2, z_3)^t \lra (z_1\tau' + z_2).\]
The kernel is the $\pi_1(E_{\tau'})$--module corresponding to $\varphi^*N^*_{E/M}$. Note that there is an inclusion map
  \begin{equation} \label{inclN}
        \varphi^*N^*_{A_\tau/M} \simeq \O_{E_{\tau'}} \hookrightarrow \varphi^*N^*_{E/M}
  \end{equation}
into the subbundle $\varphi^*N^*_{E/M}$ of $\varphi^*\Omega_M^1$.

\begin{proposition} \label{ellinfsplit} Let $\varphi: E_{\tau'} \to A_{\tau}$ be an elliptic curve in the fiber $A_{\tau}$ of $\pi$. Then $\dim_{\KC}H^0(E_{\tau'}, \varphi^*\Omega_M^1) = 2$ and the global differential map $D\varphi: H^0(E_{\tau'}, \varphi^*\Omega_M^1) \lra H^0(E_{\tau'},  \Omega_{E_{\tau'}}^1)$ is surjective. In particular, elliptic curves in fibers of $\pi$ split in $M$.
\end{proposition}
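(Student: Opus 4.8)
The strategy mirrors Lemma~\ref{fibnonspl}: I would compute $H^0(E_{\tau'}, \varphi^*\Omega_M^1)$ explicitly via the flat-bundle description already set up, then check that the image of the differential map $D\varphi$ is all of $H^0(E_{\tau'}, \Omega_{E_{\tau'}}^1)$, which is one-dimensional. Since $E_{\tau'}$ is an elliptic curve, once I establish that $\dim_{\KC} H^0(E_{\tau'}, \varphi^*\Omega_M^1) = 2$ and that $D\varphi$ is surjective, the splitting follows: the kernel of $D\varphi$ on global sections is exactly $H^0(E_{\tau'}, \varphi^*N^*_{E/M})$, so the connecting map in the long exact sequence of \Formel{EinM} must vanish. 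More directly, surjectivity of $D\varphi$ on global sections gives a global holomorphic $1$-form on $E_{\tau'}$ in the image of $\iota^*\Omega_M^1$, hence a splitting of the dual of \Formel{EinM}, hence of \Formel{EinM} itself.

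\textbf{Computing the sections.}
First I would write down the condition for a holomorphic $f = (f_1, f_2, f_3)^t \in \O_{E_{\tau'}}^{\oplus 3}$ to descend to a section of $\varphi^*\Omega_M^1$, namely the invariance condition under $\rho_E$:
\[
   f(w + m\tau' + n) = \rho_E(m\tau' + n)\, f(w) \qquad \text{for all } m,n \in \KZ,
\]
where $w$ is the coordinate on $\KC$ covering $E_{\tau'}$. From the explicit form of $\rho_E$, the first two components satisfy $f_i(w + m\tau' + n) = f_i(w)$, forcing $f_1, f_2$ constant. The third component then satisfies $f_3(w + m\tau' + n) = f_3(w) - (m\mu_{11} + n) f_1 - m\mu_{21} f_2$, so $f_3$ is a function on $\KC$ whose translates differ by an affine expression in $m, n$; differentiating shows $f_3(w) = aw + b$ for constants $a, b$, and the quasi-periodicity relation then imposes \emph{two} linear constraints relating $a$ to $(f_1, f_2)$ (one from the $m$-direction, one from the $n$-direction). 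The key difference from \S 4 is that here the coefficient matrix is $2\times 3$, not square, so I expect exactly one nontrivial relation to survive beyond the constant section $(0,0,b)^t$, giving the predicted dimension $2$.

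\textbf{Surjectivity of $D\varphi$ and the main obstacle.}
Recall $D\varphi$ is induced by $(z_1, z_2, z_3)^t \mapsto z_1\tau' + z_2$, so on the constant solution $(f_1, f_2, 0)^t$ it returns $f_1 \tau' + f_2 \in \KC = H^0(E_{\tau'}, \Omega_{E_{\tau'}}^1)$. To prove surjectivity I must exhibit a global section with $(f_1, f_2) \neq 0$; the constant section $(0,0,b)^t$ coming from \Formel{inclN} lies in the kernel of $D\varphi$, so it is the \emph{second}, non-constant section that matters. The main obstacle is verifying that the surviving linear relation between $a$ and $(f_1, f_2)$ is genuinely consistent, i.e.\ that the two quasi-periodicity constraints are compatible and admit a solution with $(f_1, f_2) \neq (0,0)$. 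This is where the special relation \Formel{EVmu}, $\tau'(\tau,1)^t = \mu_\tau$, enters decisively: it is precisely the condition ensuring that the two constraints (from $m$ and $n$) are linearly dependent rather than independent, so that one degree of freedom remains. Concretely I would substitute $\mu_{11}, \mu_{21}$ in terms of $\tau, \tau'$ using \Formel{EVmu} and check the resulting $2\times 2$ determinant in $(f_1, f_2)$ degenerates, leaving a one-dimensional solution space mapping isomorphically onto $H^0(E_{\tau'}, \Omega_{E_{\tau'}}^1)$ under $D\varphi$. Once this consistency is confirmed, both dimension count and surjectivity follow, and the splitting of \Formel{EinM} is immediate.
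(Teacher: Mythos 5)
Your skeleton is the paper's: the flat description of $\varphi^*\Omega_M^1$ via $\rho_E$, the reduction to $f_1,f_2$ constant and $f_3(w)=aw+b$, the two linear constraints coming from the generators $\tau'$ and $1$ of $\pi_1(E_{\tau'})$, and the (correct) observation that surjectivity of $D\varphi$ on global sections kills the extension class of \Formel{EinM}. The gap is in the linear algebra you predict at the decisive moment, and as stated it would fail. The two constraints are $a=-f_1$ and $a\tau'=-\mu_{11}f_1-\mu_{21}f_2$, i.e.\ two linear equations in the three unknowns $(f_1,f_2,a)$; a one--dimensional solution space therefore survives for trivial reasons, independently of \Formel{EVmu}. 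Moreover, the two equations are in fact linearly \emph{independent}: dependence would require $\mu_{21}=0$ and $\mu_{11}=\tau'$, impossible since $\mu$ has real entries while $\tau'\notin\KR$ --- and this independence is exactly what gives $h^0=2$ rather than $3$. So your claim that \Formel{EVmu} ensures the two constraints are ``linearly dependent rather than independent, so that one degree of freedom remains'' is backwards, and it contradicts your own dimension count: dependent constraints would yield $\dim_{\KC}H^0(E_{\tau'},\varphi^*\Omega_M^1)=3$.

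The same confusion affects surjectivity. A solution with $(f_1,f_2)\neq(0,0)$ exists automatically (any nonzero solution has $(f_1,f_2)\neq 0$ precisely because $a=-f_1$), so that is not what has to be proved. What has to be ruled out is that the solution line of \Formel{fin}, i.e.\ of $(\mu_{11}-\tau')f_1+\mu_{21}f_2=0$, coincides with $\ker D\varphi=\{\tau'f_1+f_2=0\}$; equivalently, one must show that the determinant $\mu_{11}-(1+\mu_{21})\tau'$ is \emph{nonzero} --- the opposite of your ``degenerating $2\times 2$ determinant''. This is where \Formel{EVmu} genuinely enters: $\mu_{11}=(1+\mu_{21})\tau'$ with $\mu_{11},\mu_{21}\in\KR$ forces $\mu_{11}=0$ and $\mu_{21}=-1$, and then the second row of \Formel{EVmu} gives $\mu_{22}=\tau+\tau'$, absurd because $\tau,\tau'\in\Sieg_1$ have positive imaginary part while $\mu_{22}\in\KR$. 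The paper packages this more cleanly: solutions of \Formel{fin} form the $\tau'$--eigenspace of $\mu^t$, which is spanned by $(1,-\bar{\tau})$, so $D\varphi$ takes the value $\tau'-\bar{\tau}\neq 0$ on the corresponding section. Your plan of ``substitute \Formel{EVmu} and check'' can be completed, but only after reversing the expected outcome; searching for a degenerate determinant, you would not find one.
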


\begin{proof}
  The space $H^0(E_{\tau'}, \varphi^*\Omega_M^1)$ is isomorphic to the space of holomorphic $f = (f_1, f_2, f_3) \in \O_{\KC}^3$ satisfying
 \begin{equation} \label{invcondE}
f(z+m\tau' + n) = \rho_E(m\tau' + n)f(z) \quad \mbox{for any } m, n \in \KZ.
  \end{equation}
The sections coming from $\varphi^*N^*_{A_\tau/M} \simeq \O_{E_{\tau'}} \hookrightarrow \varphi^*N^*_{E/M}$ correspond to $f = (0,0,b)^t$, $b \in \KC$ constant. We have to show that there is an additional dimension.

Let $f = (f_1, f_2, f_3)$ satisfy \Formel{invcondE}. As in the proof of lemma~\ref{fibnonspl} $f_1, f_2$ are constant while $f_3(z) = az+b$ for some $a, b \in \KC$. We may assume $b = 0$. Then \Formel{invcondE} reduces to
  \[am\tau' + an = -(m\mu_{11}+n)f_1 -m\mu_{21}f_2 \quad \mbox{for any } m, n \in \KZ.\]
After eliminating $a$ from the equations obtained for $(m,n) = (1,0)$ and $(0,1)$ we get 
\begin{equation} \label{fin}
   \tau'f_1 = \mu_{11}f_1 + \mu_{21}f_2.
\end{equation}
Conversely, if $f_1, f_2$ satisfies \Formel{fin}, then  $f = (f_1, f_2, -f_1z)$ is a solution to \Formel{invcondE}. By \Formel{EVmu}, $\tau'$ is an eigenvalue of $\mu$ and therefore also of $\mu^t$. A nonzero vector $(f_1, f_2)$ satisfies \Formel{fin} if and only if $(f_1, f_2)^t$ is an eigenvector of $\mu$ to the eigenvalue $\tau'$. Therefore $h^0(E_{\tau'}, \varphi^*\Omega_M^1) = 2$.

The eigenspace of $\mu^t$ to $\tau'$ is
  \[\langle \left(\begin{array}{c} 1 \\ \-\bar{\tau} \end{array}\right)\rangle_{\KC}.\]
Since $d\varphi(1, -\bar{\tau}, -z) = \tau - \bar{\tau} = 2\Im m\tau \not= 0$, $D\varphi$ is surjective.
\end{proof}

\begin{proof}[Proof of Proposition~\ref{result}.] Lemma~\ref{SplitCurves} and \ref{fibnonspl} show that a compact submanifold $N$ that splits in $M$ is an {\'e}tale multisection of $\pi$ or an elliptic curve in a fiber of $\pi$.

{\'E}tale multisections $\tilde{C}$ split because of
  \[0 \lra \pi^*K_C \lra \Omega_M^1 \lra \Omega_{M/C}^1\lra 0.\]
The restriction to $\tilde{C}$ shows $d\pi|_{\tilde{C}}$ splits $\Omega_M^1|_{\tilde{C}} \lra K_{\tilde{C}}$. Elliptic curves in fibers split in $M$ by Proposition~\ref{ellinfsplit}.
\end{proof}

\end{document}